\documentclass{llncs}

\usepackage{amsmath} 
\usepackage{amsbsy} 
\usepackage{amssymb}
\usepackage{bm}
\usepackage{cite}
\usepackage{graphicx}

\title{A Singularly Perturbed Boundary Value Problems with Fractional Powers of Elliptic Operators}

\author{Petr N. Vabishchevich$^{1,2}$}

\institute{$^1$Nuclear Safety Institute, 52, B. Tulskaya, 115191 Moscow, Russia \\
$^2$North-Eastern Federal University, 58, Belinskogo, 677000 Yakutsk, Russia}

\begin{document}

\maketitle

\begin{abstract}
A boundary value problem for a fractional power $0 < \varepsilon < 1$ of the second-order elliptic operator is considered.
The boundary value problem is singularly perturbed when $\varepsilon \rightarrow 0$.
It is solved numerically using a time-dependent problem for a pseudo-parabolic equation.
For the auxiliary Cauchy problem, the standard two-level schemes with weights are applied.
The numerical results are presented for a model two-dimen\-sional
boundary value problem with a fractional power of an elliptic operator.
Our work focuses on the solution of the boundary value problem with $0 < \varepsilon \ll 1$.
\end{abstract}

\section{Introduction}

Non-local applied mathematical models based on the use of fractional derivatives in time and space 
are actively discussed in the literature \cite{baleanu2012fractional,kilbas2006theory}. 
Many models, which are used in applied physics, biology, hydrology, and finance,
involve both sub-diffusion (fractional in time) and super-diffusion (fractional in space) operators. 
Super-diffusion problems are treated as problems with a fractional power of an elliptic operator.
For example, suppose that in a bounded domain $\Omega$ on the set of functions 
$u(\bm x) = 0, \ \bm x \in \partial \Omega$, 
there is defined the operator $\mathcal{A}$: $\mathcal{A}  u = - \triangle u, \ \bm x \in \Omega$. 
We seek the solution of the problem for the equation with the fractional power of an elliptic operator:
\[
 \mathcal{A}^{\varepsilon } u = f,
\] 
with $0 < \varepsilon < 1$ for a given $f(\bm x), \ \bm x \in \Omega$. 

To solve problems with the fractional power of an elliptic operator, we can apply
finite volume or finite element methods oriented to using arbitrary
domains discretized by irregular computational grids \cite{KnabnerAngermann2003,QuarteroniValli1994}.
The computational realization is associated with the implementation of the matrix function-vector multiplication.
For such problems, different approaches \cite{higham2008functions} are available.
Problems of using Krylov subspace methods with the Lanczos approximation
when solving systems of linear equations associated with
the fractional elliptic equations are discussed, e.g., in \cite{ilic2009numerical}.
A comparative analysis of the contour integral method, the extended Krylov subspace method, and the preassigned 
poles and interpolation nodes method for solving
space-fractional reaction-diffusion equations is presented in \cite{burrage2012efficient}.
The simplest variant is associated with the explicit construction of the solution using the known
eigenvalues and eigenfunctions of the elliptic operator with diagonalization of the corresponding matrix
\cite{bueno2012fourier,ilic2005numerical,ilic2006numerical}. 
Unfortunately, all these approaches demonstrates too high computational complexity for multidimensional problems.

We have proposed \cite{vabishchevich2014numerical} a computational algorithm for solving
an equation with fractional powers of elliptic operators on the basis of
a transition to a pseudo-parabolic equation.
For the auxiliary Cauchy problem, the standard two-level schemes are applied.
The computational algorithm is simple for practical use, robust, and applicable to solving
a wide class of problems. A small number of pseudo-time steps is required to reach a steady-state solution.
This computational algorithm for solving equations with fractional powers of operators
is promising when considering transient problems. 

The boundary value problem for the fractional power of an elliptic operator
is singularly perturbed when $\varepsilon \rightarrow 0$. To solve it numerically,
we focus on numerical methods that are designed for classical elliptic problems
of convection-diffusion-reaction \cite{miller2012fitted,roos2008robust}.
In particular, the main features are taken into account via using locally refining grids.
The standard strategy of goal-oriented error control for conforming finite element 
discretizations \cite{ainsworth2011posteriori,bangerth2013adaptive} is applied.
 
\section{Problem formulation}

In a bounded polygonal domain $\Omega \subset R^m$, $m=1,2,3$ with the Lipschitz continuous boundary $\partial\Omega$,
we search the solution for the problem with a fractional power of an elliptic operator.
Define the elliptic operator as
\begin{equation}\label{1}
  \mathcal{A}  u = - {\rm div} ( k({\bm x}) {\rm grad} \, u )
\end{equation} 
with coefficient $0 < k_1 \leq k({\bm x}) \leq k_2$.
The operator $\mathcal{A}$ is defined on the set of functions $u({\bm x})$ that satisfy
on the boundary $\partial\Omega$ the following conditions:
\begin{equation}\label{2}
  u ({\bm x}) = 0,
  \quad {\bm x} \in \partial \Omega .
\end{equation} 

In the Hilbert space $H = L_2(\Omega)$, we define the scalar product and norm in the standard way:
\[
  (u,v) = \int_{\Omega} u({\bm x}) v({\bm x}) d{\bm x},
  \quad \|u\| = (u,u)^{1/2} .
\] 
For the spectral problem
\[
 \mathcal{A}  \varphi_k = \lambda_k \varphi_k, 
 \quad \bm x \in \Omega , 
\] 
\[
  \varphi_k ({\bm x}) = 0,
  \quad {\bm x} \in \partial \Omega , 
\] 
we have 
\[
 \lambda_1 \leq \lambda_2 \leq ... ,
\] 
and the eigenfunctions  $ \varphi_k, \ \|\varphi_k\| = 1, \ k = 1,2, ...  $ form a basis in $L_2(\Omega)$. Therefore, 
\[
 u = \sum_{k=1}^{\infty} (u,\varphi_k) \varphi_k .
\] 
Let the operator $\mathcal{A}$ be defined in the following domain:
\[
 D(\mathcal{A} ) = \{ u \ | \ u(\bm x) \in L_2(\Omega), 
 \quad  \sum_{k=0}^{\infty} | (u,\varphi_k) |^2 \lambda_k < \infty \} .
\] 
Under these conditions the operator $\mathcal{A}$ is self-adjoint and positive defined: 
\begin{equation}\label{3}
  \mathcal{A}  = \mathcal{A}^* \geq \delta I ,
  \quad \delta > 0 ,    
\end{equation} 
where $I$ is the identity operator in $H$. For $\delta$, we have $\delta = \lambda_1$.
In applications, the value of $\lambda_1$ is unknown (the spectral problem must be solved).
Therefore, we assume that $\delta \leq \lambda_1$ in (\ref{3}).
Let us assume for the fractional power of the  operator $\mathcal{A}$
\[
 \mathcal{A}^{\varepsilon} u =  \sum_{k=0}^{\infty} (u,\varphi_k) \lambda_k^{\varepsilon}  \varphi_k .
\] 
We seek the solution of the problem with the fractional power of the operator $\mathcal{A}$. 
The solution $u(\bm x)$ satisfies the equation
\begin{equation}\label{4}
 \mathcal{A}^{\varepsilon } u = f,
\end{equation} 
with $0 < \varepsilon < 1$ for a given $f(\bm x), \ \bm x \in \Omega$. 

The key issue in the study of the computational algorithm for solving the problem (\ref{4})
is to establish the stability of the approximate solution with respect to small perturbations of the 
right-hand side in various norms.
In view of (\ref{3}),  the solution of the problem (\ref{4}) satisfies the a priori estimate
\begin{equation}\label{5}
 \|u\| \leq  \delta^{-\varepsilon} \|f\| ,   
\end{equation}
which is valid for all $0 < \varepsilon  < 1$. 

The boundary value problem for the fractional power of the elliptic operator (\ref{4}) 
demonstrates a reduced smoothness when $\varepsilon \rightarrow 0$.
For the solution, we have (see, e.g., \cite{yagi2009abstract}) the estimate
\[
 \|u\|_{2\varepsilon} \leq C \|f\|,
\] 
with $0 \leq \varepsilon < 1/2$, is $\|\cdot\|_{2\varepsilon}$ is the norm in $H^{2\varepsilon} (\Omega)$.
For the limiting solution, we have 
\[
 u_0(\bm x) = f(\bm x), 
 \quad \bm x \in \Omega .
\] 
Thus, a singular behavior of the solution of the  problem (\ref{4}) appears with
$\varepsilon \rightarrow 0$ and is governed by the right-hand side $f(\bm x)$.

\section{Discretization in space}

To solve numerically the problem (\ref{4}), we employ finite-element 
approximations in space \cite{brenner2008mathematical,Thomee2006}. 
For (\ref{1}) and (\ref{2}), we define the bilinear form
\[
 a(u,v) = \int_{\Omega } k \, {\rm grad} \, u \, {\rm grad} \, v .
\] 
By (\ref{3}), we have
\[
 a(u,u) \geq \delta \|u\|^2 .  
\]
Define a subspace of finite elements $V^h \subset H^1_0(\Omega)$.
Let $\bm x_i, \ i = 1,2, ..., M_h$ be triangulation points for the domain $\Omega$.
Define pyramid function $\chi_i(\bm x) \subset V^h, \ i = 1,2, ..., M_h$, where
\[
 \chi_i(\bm x_j) = \left \{
 \begin{array}{ll} 
 1, & \mathrm{if~}  i = j, \\
 0, & \mathrm{if~}  i \neq  j .
 \end{array}
 \right . 
\] 
For $v \in V_h$, we have
\[
 v(\bm x) = \sum_{i=i}^{M_h} v_i \chi_i(\bm x),
\] 
where $v_i = v(\bm x_i), \ i = 1,2, ..., M_h$.
We have defined  Lagrangian finite elements of first degree, i.e., based on the piecewise-linear
approximation. We will also use Lagrangian finite elements of second degree defined in a similar way.

We define the discrete elliptic operator $A$ as
\[
 (A y, v) = a(y,v),
 \quad \forall \ y,v \in V^h . 
\]
The fractional power of the operator $A$ is defined similarly to $\mathcal{A}^{\varepsilon}$.
For the spectral problem
\[
 A \widetilde{\varphi}_k = \widetilde{\lambda}_k 
\] 
we have
\[
 \widetilde{\lambda}_1 \leq \widetilde{\lambda}_2 \leq ... \leq  \widetilde{\lambda}_{M_h},
 \quad \| \widetilde{\varphi}_k\| = 1,
 \quad k = 1,2, ..., M_h . 
\]
The domain of definition for the operator $A$ is
\[
 D(A) = \{ y \ | \ y \in V^h, 
 \quad  \sum_{k=0}^{M_h} | (y,\widetilde{\varphi}_k) |^2 \widetilde{\lambda}_k < \infty \} .
\]
The operator $A$ acts on a finite dimensional space $V^h$ defined on  the domain $D(A)$
and, similarly to (\ref{3}), 
\begin{equation}\label{6}
 A = A^* \geq \delta I ,
 \quad \delta > 0 , 
\end{equation} 
where $\delta \leq \lambda_1 \leq \widetilde{\lambda}_1$. 
For the fractional power of the operator $A$, we suppose
\[
 A^{\varepsilon} y = \sum_{k=1}^{M_h} (y, \widetilde{\varphi}_k) \widetilde{\lambda}_k^{\varepsilon} 
 \widetilde{\varphi}_k .
\] 
For the problem (\ref{4}), we put into the correspondence the operator equation 
for $w(t) \in V^h$:
\begin{equation}\label{7}
 A^{\varepsilon } w = \psi, 
\end{equation} 
where $\psi = P f$ with $P$ denoting $L_2$-projection onto $V^h$.
For the solution of the problem (\ref{6}), (\ref{7}), we obtain (see (\ref{5}))
the estimate
\begin{equation}\label{8}
 \|w\| \leq  \delta^{-\varepsilon} \|\psi\| ,   
\end{equation}
for all $0 < \varepsilon  < 1$. 

\section{Singularly perturbed problem for a diffusion-reaction equation} 

The object of our study is associated with the development of a computational algorithm
for approximate solving the singularly perturbed problem (\ref{4}).
After constructing a finite element approximation, we arrive at equation (\ref{7}).
Features of the solution related to a boundary layer are investigated on
a model singularly perturbed problem for an equation of diffusion-reaction. 
The key moment is associated with selecting adaptive computational grids (triangulations).

In view of
\[
 A^\varepsilon = \left ( \exp (\ln A) \right )^\varepsilon  =
 I + \varepsilon  \ln A + \mathcal{O} (\varepsilon^2 ) ,
\]
we put the problem (\ref{7}) into the correspondence with solving the equation
\begin{equation}\label{9}
 \varepsilon A u + u = \psi .
\end{equation}
The equation (\ref{9}) corresponds to solving the Dirichlet problem (see the condition (\ref{2}))
for the diffusion-reaction equation
\begin{equation}\label{10}
 - \varepsilon \, {\rm div}  (k({\bm x}) {\rm grad} \, u) + u = f(\bm x),
 \quad \bm x \in \Omega . 
\end{equation}
Basic computational algorithms for the singularly perturbed boundary
problem (\ref{2}), (\ref{10}) are considered, for example, in \cite{miller2012fitted,roos2008robust}. 

In terms of practical applications, the most interesting approach is based on an adaptation of
a computational grid to peculiarities of the problem solution via a posteriori error estimates.
Among main approaches, we highlight the strategy of the goal-oriented error control for conforming finite element
discretizations \cite{ainsworth2011posteriori,bangerth2013adaptive}, which is applied to approximate
solving boundary value problems for elliptic equations.

The strategy of goal-oriented error control is based on choosing a calculated functional. 
The accuracy of its evaluation is tracked during computations. In our Dirichlet problem for
the second-order elliptic equation, the solution is varied drastically near the boundary.
So, it seems natural to control the accuracy of calculations for the normal derivatives of the solution
(fluxes) across the boundary or a portion of it. Because of this, we put
\[
 G(u) = - \int_{\partial \Omega } \varepsilon k(\bm x) ({\rm grad} \, u \cdot \bm n ) d \, \bm x,
\] 
where $\bm n$ is the outward normal to the boundary.
An adaptation of a finite element mesh is based on an iterative local refinement of the grid 
in order to evaluate the goal functional with a given accuracy $\eta$
on the deriving approximate solution $u_h$, i.e.,
\[
 | G(u) - G(u_h) | \leq \eta .
\]
To conduct our calculations, we used the \textsf{FEniCS} framework (see, e.g., \cite{logg2012automated})
developed for general engineering and scientific calculations via finite elements.
Features of the goal-oriented procedure for local refinement of the computational grid
are described in \cite{rognes2013automated} in detain. Here, we consider only a key idea of 
the adaptation strategy of finite element meshes, which is associated with selecting the goal functional.

The model problem (\ref{2}), (\ref{10}) is considered with
\[
 k(\bm x) = 1,
 \quad f(\bm x) = (1-x_1) x^2_2,
\] 
in the unit square ($\Omega = (0,1) \times (0,1)$).
The threshold of accuracy for calculating the functional $G(u)$ is defined by the value of $\eta = 10^{-5}$. 
As an initial mesh, there is used the uniform grid obtained via division by 8 intervals in each direction
(step 0 --- 128 cells).

First, Lagrangian finite elements of first order have been used in our calculations. For this case,
the improvement of the goal functional during the iterative procedure of adaptation is illustrated by the data 
presented in Table~\ref{tab-1}. Table~\ref{tab-2} demonstrates values of the goal functional
$G(u_h)$ calculated on the final computational grid, the number of vertices of this final grid and 
the number of adaptation steps for solving the problem at various values of the small parameter $\varepsilon$.
These numerical results demonstrate the efficiency of the proposed strategy for goal-oriented error control 
for conforming finite element discretizations applied to approximate solving singular perturbed problems 
of diffusion-reaction (\ref{2}), (\ref{10}).

\begin{table}
\begin{center}
 \caption{Calculation of the goal functional during adaptation steps}
 \begin{tabular}{c|lr|lr|lr}\label{tab-1}
  $\varepsilon$ & \multicolumn{2}{|c}{$10^{-1}$} & \multicolumn{2}{|c}{$10^{-3}$} & \multicolumn{2}{|c}{$10^{-5}$} \\
  \hline
  Step of adaptation $s$ & ~~ $G(u_h)$  & ~ $M_h $  ~ & ~~ $G(u_h)$  & ~ $M_h $ ~ & ~~ $G(u_h)$  & ~ $M_h $ ~  \\
  \hline
   0  & ~~ 0.087608 & 81    & ~~ 0.0056973 & 81    & ~~ 0.00006643  & 81 \\
   1  & ~~ 0.110432 & 97    & ~~ 0.0107507 & 98    & ~~ 0.00015584  & 95 \\
   2  & ~~ 0.116155 & 140   & ~~ 0.0129506 & 132   & ~~ 0.00023996  & 120 \\
   3  & ~~ 0.119766 & 222   & ~~ 0.0155597 & 195   & ~~ 0.00035644  & 164 \\
   4  & ~~ 0.122702 & 384   & ~~ 0.0175113 & 305   & ~~ 0.00050472  & 225 \\
   5  & ~~ 0.125653 & 694   & ~~ 0.0194985 & 466   & ~~ 0.00068154  & 349 \\
   6  & ~~ 0.127950 & 1235  & ~~ 0.0210232 & 754   & ~~ 0.00090839  & 550 \\
   7  & ~~ 0.128835 & 2179  & ~~ 0.0221562 & 1279  & ~~ 0.00115091  & 853 \\
   8  & ~~ 0.129542 & 3841  & ~~ 0.0229284 & 2132  & ~~ 0.00137740  & 1242 \\
   9  & ~~ 0.129940 & 6540  & ~~ 0.0234492 & 3753  & ~~ 0.00161273  & 1865 \\
   10 & ~~ 0.130149 & 11040 & ~~ 0.0237487 & 6626  & ~~ 0.00181249  & 2711 \\
\end{tabular}
\end{center} 
\end{table}

\begin{table}
\begin{center}
 \caption{Adaptation for various values of $\varepsilon$}
 \begin{tabular}{c|l|c|c}\label{tab-2}
  $\varepsilon$ & ~ Goal functional $G(u_h)$ ~ & ~ Number of vertices ~ & ~ Number of adaptation steps $s$ \\
  \hline
  $10^{-1}$  & ~~~~~~~ 0.130396     & 51868  & 13 \\
  $10^{-2}$  & ~~~~~~~ 0.064867     & 72297  & 14 \\
  $10^{-3}$  & ~~~~~~~ 0.024191     & 90170  & 15 \\
  $10^{-4}$  & ~~~~~~~ 0.008061     & 67476  & 16 \\
  $10^{-5}$  & ~~~~~~~ 0.002580     & 99003  & 18 \\
\end{tabular}
\end{center} 
\end{table} 

Next, similar results have been obtained using Lagrangian finite elements of
second order. For this case, summary data are presented in Table~\ref{tab-3}.
As expected, the desired accuracy $\eta = 10^{-5}$ is reached on adaptive
meshes of smaller sizes than in the case of Lagrangian finite elements of
first order (see Table~\ref{tab-2} for a comparison).

\begin{table}
\begin{center}
 \caption{Adaptation for Lagrangian elements of second order}
 \begin{tabular}{c|l|c|c}\label{tab-3}
  $\varepsilon$ & ~ Goal functional $G(u_h)$ ~ & ~ Number of vertices ~ & ~ Number of adaptation steps $s$  \\
  \hline
  $10^{-1}$  & ~~~~~~~ 0.130423     & 3574  & 7 \\
  $10^{-2}$  & ~~~~~~~ 0.064884     & 5137  & 8 \\
  $10^{-3}$  & ~~~~~~~ 0.024184     & 6573  & 9 \\
  $10^{-4}$  & ~~~~~~~ 0.008076    & 12775  & 11 \\
  $10^{-5}$  & ~~~~~~~ 0.002574    & 18501  & 12 \\
\end{tabular}
\end{center} 
\end{table} 

\section{Numerical algorithm for the problem with a fractional power}

An approximate solution of the problem (\ref{7}) is sought as a solution of an auxiliary pseudo-time 
evolutionary problem \cite{vabishchevich2014numerical}.
Assume that
\[
 y(t) = \delta^{\varepsilon} (t (A - \delta I) + \delta I)^{-\varepsilon} y(0) .
\]
Therefore
\[
 y(1) =  \delta^{\varepsilon} A^{-\varepsilon} y(0)
\]  
and then $w = y(1)$.
The function $y(t)$ satisfies the evolutionary equation
\begin{equation}\label{11}
 (t D + \delta I) \frac{d y}{d t} + \varepsilon D y = 0 ,
 \quad 0 < t \leq 1 ,
\end{equation}  
where
\[
 D = A - \delta I .
\] 
By (\ref{6}), we get
\begin{equation}\label{12}
 D = D^* > 0 .
\end{equation} 
We supplement (\ref{11}) with the initial condition
\begin{equation}\label{13}
 y(0) = \delta^{-\varepsilon} \psi .  
\end{equation} 
The solution of equation (\ref{7}) can be defined as the solution of the Cauchy problem 
(\ref{11})--(\ref{13}) at the final pseudo-time moment $t=1$.

For the solution of the problem (\ref{11}), (\ref{13}), it is possible to obtain various a priori estimates.
The elementary estimate that is consistent with the estimate (\ref{8}) have the form
\begin{equation}\label{14}
 \|y(t)\| \leq \|y(0)\| .
\end{equation} 
To get (\ref{14}), multiply scalarly equation (\ref{11}) by $\varepsilon y + t dy/ dt$.

To solve numerically the problem (\ref{11}), (\ref{13}),
we use the simplest implicit two-level scheme.
Let $\tau$ be a step of a uniform grid in time such that $y^n = y(t^n), \ t^n = n \tau$, $n = 0,1, ..., N, \ N\tau = 1$.
Let us approximate equation (\ref{11}) by the implicit two-level scheme
\begin{equation}\label{15}
 (t^{\sigma(n)} D + \delta I) \frac{ y^{n+1} - y^{n}}{\tau }
 + \varepsilon D y^{\sigma(n)} = 0,  \quad n = 0,1, ..., N-1,
\end{equation}
\begin{equation}\label{16}
 y^0 = \delta^{-\varepsilon} \psi  .
\end{equation} 
We use the notation
\[
  t^{\sigma(n)} = \sigma t^{n+1} + (1-\sigma) t^{n},
  \quad y^{\sigma(n)} = \sigma y^{n+1} + (1-\sigma) y^{n}.
\]
For $\sigma =0.5$, the difference scheme (\ref{15}), (\ref{16}) approximates the problem 
(\ref{11}), (\ref{12})
with the second order by $\tau$, whereas for other values of $\sigma$, we have only the first order.

\begin{theorem}\label{t-1}
For $\sigma \geq 0.5$ the difference scheme (\ref{15}), (\ref{16}) 
is unconditionally stable with respect to the initial data.
The approximate solution satisfies the estimate 
\begin{equation}\label{17}
 \|y^{n+1}\| \leq \|y^0\| , 
 \quad n = 0,1, ..., N-1.
\end{equation} 
\end{theorem}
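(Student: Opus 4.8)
The plan is to run a discrete energy argument that mirrors the derivation of the continuous estimate (\ref{14}), using only the self-adjointness and positivity (\ref{12}) of $D$ together with the weight restriction $\sigma \geq 0.5$. The whole estimate will follow from a single per-step inequality $\|y^{n+1}\| \leq \|y^{n}\|$, after which (\ref{17}) comes by induction on $n$.

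First I would record the purely algebraic identity that ties the $L_2$-norm increment to the weighted value $y^{\sigma(n)}$. Since $y^{n+1}+y^{n} = 2\,y^{\sigma(n)} + (1-2\sigma)(y^{n+1}-y^{n})$, expanding $\|y^{n+1}\|^2-\|y^{n}\|^2 = (y^{n+1}-y^{n},\,y^{n+1}+y^{n})$ gives
\begin{equation*}
 \|y^{n+1}\|^2 - \|y^{n}\|^2 = 2\,(y^{n+1}-y^{n},\,y^{\sigma(n)}) + (1-2\sigma)\,\|y^{n+1}-y^{n}\|^2 .
\end{equation*}
For $\sigma \geq 0.5$ the coefficient $1-2\sigma$ is nonpositive, so the last term is harmless and it only remains to show that the cross term is nonpositive.

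To control the cross term I would eliminate $y^{\sigma(n)}$ by means of the scheme (\ref{15}) itself. Writing $t^{\sigma(n)} = (n+\sigma)\tau \geq 0$ and $B_n := t^{\sigma(n)} D + \delta I$, the scheme reads $B_n (y^{n+1}-y^{n}) = -\tau \varepsilon D\, y^{\sigma(n)}$; because $D=D^*>0$ by (\ref{12}) it is invertible, so $y^{\sigma(n)} = -(\varepsilon\tau)^{-1} D^{-1} B_n (y^{n+1}-y^{n})$. Substituting yields
\begin{equation*}
 2\,(y^{n+1}-y^{n},\,y^{\sigma(n)}) = -\frac{2}{\varepsilon\tau}\left(y^{n+1}-y^{n},\; D^{-1} B_n\,(y^{n+1}-y^{n})\right),
\end{equation*}
and since $D^{-1} B_n = t^{\sigma(n)} I + \delta D^{-1}$ is self-adjoint and nonnegative (it is the sum of $t^{\sigma(n)} I \geq 0$ and $\delta D^{-1} > 0$), this cross term is $\leq 0$. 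Combining the two displays gives $\|y^{n+1}\|^2 \leq \|y^{n}\|^2$ for every $n \geq 0$ and every $\tau > 0$, which is exactly unconditional stability.

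The one point that needs care, and which I expect to be the main obstacle, is that the operator $B_n$ acting on the difference quotient \emph{changes from step to step} through $t^{\sigma(n)}$, so one cannot simply work in a single fixed $B$-energy norm as in the classical Samarskii theory. The decisive observation is that after multiplying by $D^{-1}$ the relevant operator $t^{\sigma(n)} I + \delta D^{-1}$ stays nonnegative for all $n \geq 0$, while the weight condition $\sigma \geq 0.5$ is precisely what discards the residual $\|y^{n+1}-y^{n}\|^2$ contribution. As an independent cross-check I would diagonalize $D$ (eigenvalues $d_k>0$): each spectral component obeys $y^{n+1}_k = (1-q_k)\,y^{n}_k$ with $q_k = \varepsilon\tau d_k\big/\big((n+\sigma(1+\varepsilon))\tau d_k + \delta\big)$, and a short computation shows $q_k \in (0,2)$ whenever $\sigma \geq 0.5$, so every mode contracts and $\|y^{n+1}\| \leq \|y^{n}\|$ follows mode-by-mode, confirming (\ref{17}).
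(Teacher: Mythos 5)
Your proof is correct, and it rests on the same two pillars as the paper's: the sign of the cross term $(y^{n+1}-y^{n},\,y^{\sigma(n)})$ and the algebraic identity relating $y^{\sigma(n)}$ to $\tfrac12(y^{n+1}+y^{n})$ plus a multiple of the increment (your norm identity is exactly the paper's decomposition of $y^{\sigma(n)}$ read in the other direction, and $\sigma\geq 0.5$ enters both arguments at the same place). Where you genuinely differ is in how the cross term is shown to be nonpositive. The paper rewrites (\ref{15}) as $\delta\,(y^{n+1}-y^{n})/\tau + Dv=0$ with $v=\varepsilon y^{\sigma(n)}+t^{\sigma(n)}(y^{n+1}-y^{n})/\tau$, tests with $v$ itself, and uses only $(Dv,v)\geq 0$; it never inverts $D$. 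Your route --- solving the scheme for $y^{\sigma(n)}=-(\varepsilon\tau)^{-1}D^{-1}B_n(y^{n+1}-y^{n})$ and invoking positivity of $t^{\sigma(n)}I+\delta D^{-1}$ --- is equivalent in substance but leans on the strict inequality in (\ref{12}); since $D=A-\delta I$ with only $\delta\leq\widetilde{\lambda}_1$ guaranteed, $D$ can be merely positive semi-definite in the borderline case $\delta=\widetilde{\lambda}_1$, and the paper's test-function choice survives that case while $D^{-1}$ does not. Your worry about $B_n$ changing with $n$ is a non-issue in either proof, as both work step by step in the plain $L_2$ norm rather than in a $B$-energy norm. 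The spectral cross-check at the end is a genuinely independent and more elementary verification (available because $D$ and $I$ trivially commute), and it in fact delivers slightly more than (\ref{17}), namely the strict per-mode contraction $|1-q_k|<1$ for every $\tau>0$.
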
 
 
\begin{proof}
Rewrite equation (\ref{15}) in the following form:
\[
 \delta \frac{ y^{n+1} - y^{n}}{\tau } + D \left( \varepsilon y^{\sigma(n)} + t^{\sigma(n)}\frac{ y^{n+1} - y^{n}}{\tau } \right ) = 0 . 
\] 
Multiplying scalarly it by
\[
 \varepsilon y^{\sigma(n)} + t^{\sigma(n)}\frac{ y^{n+1} - y^{n}}{\tau },
\] 
in view of (\ref{12}), we arrive at
\[
 \left ( \frac{ y^{n+1} - y^{n}}{\tau }, y^{\sigma(n)} \right ) \leq 0 . 
\]
We have
\[
 y^{\sigma(n)} = \left( \sigma - \frac{1}{2} \right ) \tau \frac{y^{n+1} - y^{n}}{\tau }   +
 \frac{1}{2} (y^{n+1} + y^{n}) .
\]  
If $\sigma \geq 0.5$, then
\[
 \|y^{n+1}\| \leq \|y^n\| , 
 \quad n = 0,1, ..., N-1 .  
\] 
Thus, we obtain (\ref{17}).
\qquad\end{proof} 

The key point in approximate solving singularly perturbed boundary value problems is associated with
mesh adaptation. In the case of solving the problem (\ref{4}), we use finite element
approximations and proceed to the problem (\ref{7}) and then formulate the Cauchy problem (\ref{11}), (\ref{13}) 
approximated by the scheme (\ref{15}), (\ref{16}).
In our case, singularity is associated only with spatial variables.

The decomposition of the solution of the problem (\ref{11}), (\ref{13}) by eigenfunctions of the operator $A$
results in
\[
 y(t) = \sum_{k=1}^{N_h} a_k(t) \widetilde{\varphi}_k .
\] 
For coefficients $a_k(t)$, we get
\[
 a_k(t) = (\psi, \widetilde{\varphi}_k) (\delta + (\widetilde{\lambda}_k-\delta) t)^{-\varepsilon} ,
 \quad k = 1,2, ..., M_h . 
\]
Because of this, errors in specifying the initial conditions monotonically decrease for increasing $t$.
A similar behavior demonstrates an approximate solution of the Cauchy problem (\ref{11}), (\ref{13})
obtained using the fully implicit scheme with $\sigma = 1$ in (\ref{15}), (\ref{16}).
For the Crank-Nicolson scheme (i.e., $\sigma = 0.5$ in (\ref{15}), (\ref{16})), we cannot guarantee a monotone
decrease of errors in time, but the error at $t=1$ will not be more than at $t=0$.
The practical significance of such an analysis is that it provides us a simple adaptation strategy
for computational grids in solving the problem (\ref{11}), (\ref{13}), namely, 
spatial mesh adaptation is conducted at the first time step of calculations.

\section{Solution of a model problem}

Below, there are presented some results of numerical solving the problem (\ref{7}) 
for small values of $\varepsilon$. A computational algorithm must track a
singular behavior of the solution, which is directly related to the singular behavior of the right-hand side
$f(\bm x)$. Let us consider the problem (\ref{2}), (\ref{10}) in the unit square $\Omega = (0,1) \times (0,1)$
with
\[
 k(\bm x) = 1,
 \quad f(\bm x) = \left (1-x_1-\exp \left (-\frac{x_1}{\mu} \right)  \right ) \left (x_2^2-\exp \left (-\frac{1-x_2}{\mu} \right)  \right ) .
\]
The singularity of the right-hand side (the singularity of a numerical solution of the problem with a fractional power 
of an elliptic operator) results from existing a boundary layer at low values of $\mu$.

An adaptation of the computational grid is performed during the calculation of the first time step using 
the two-level scheme (\ref{15}), (\ref{16}). For the basic variant, it is assumed that
$\varepsilon = 10^{-2}$, $\mu = 10^{-2}$, the initial uniform spatial grid contains 8 intervals in each direction and
the time step is $\tau = 10^{-2}$. The parameter $\delta = 2 \pi^2$ corresponds the minimal eigenvalue 
of the elliptic operator $\mathcal{A}$. Mesh adaptation is carried out taking into account peculiarities 
of the right-hand side and the goal functional defined in the form
\[
 G(u; t = \tau) = - \int_{\partial \Omega } k(\bm x) ({\rm grad} \, u \cdot \bm n ) d \, \bm x .
\]
Next, the problem (\ref{2}), (\ref{10}) is solved using the derived grid in space and
the uniform grid in time. Thus, we apply the simplest one-stage starting adaptation of the computational grid
for numerical solving the unsteady problem. Lagrangian finite elements of second order are used.
For time-stepping, the  Crank-Nicolson ($\sigma = 0.5$ in (\ref{15})) scheme is utilized.
The sequence of calculated adaptive grids is shown in Fig.~\ref{fig-1}.
Note that this sequence is weakly dependent on the choice of a time step.
The goal functional dynamics for different levels of adaptation is presented
in Table~\ref{tab-4}. The problem is solved with different values of $\varepsilon$.

\begin{table}
\begin{center}
 \caption{Calculation of the goal functional during adaptation steps}
 \begin{tabular}{c|lr|lr|lr}\label{tab-4}
  $\varepsilon$ & \multicolumn{2}{|c}{$10^{-1}$} & \multicolumn{2}{|c}{$10^{-2}$} & \multicolumn{2}{|c}{$10^{-3}$} \\
  \hline
  Step of adaptation $s$ & ~~ $G(u_h; t=\tau)$  & ~ $M_h $  ~ & ~~ $G(u_h; t=\tau)$  & ~ $M_h $ ~ & ~~ $G(u_h; t=\tau)$  & ~ $M_h $ ~  \\
  \hline
   0  & ~~ 16.0955 & 289   & ~~ 21.8932 & 289    & ~~ 22.5773  & 289 \\
   1  & ~~ 24.3875 & 315   & ~~ 33.7810 & 315    & ~~ 34.9016  & 315 \\
   2  & ~~ 31.1692 & 399   & ~~ 43.8893 & 399    & ~~ 45.4226  & 399 \\
   3  & ~~ 37.0996 & 559   & ~~ 52.8249  & 559   & ~~ 54.7328  & 559 \\
   4  & ~~ 42.0854 & 833   & ~~ 60.4373 & 837    & ~~ 62.2786  & 834 \\
   5  & ~~ 45.7594 & 1270  & ~~ 66.2019 & 1282   & ~~ 68.4363  & 1264 \\
   6  & ~~ 48.6087 & 1849  & ~~ 70.4881 & 1885   & ~~ 73.2101  & 1889 \\
   7  & ~~ 50.3070 & 2753  & ~~ 73.1491 & 2778   & ~~ 75.9998  & 2774 \\
   8  & ~~ 51.2621 & 4067  & ~~ 74.6778 & 4120   & ~~ 77.5762  & 4125 \\
   9  & ~~ 51.9766 & 5965  & ~~ 75.8362 & 5968   & ~~ 78.7648  & 6028 \\
   10 & ~~ 52.2862 & 9201  & ~~ 76.3402 & 9235   & ~~ 79.2942  & 9261 \\
\end{tabular}
\end{center} 
\end{table}

\begin{figure}[htp]
\begin{minipage}{0.5\linewidth}
  \begin{center}
    \includegraphics[width=1.0\linewidth] {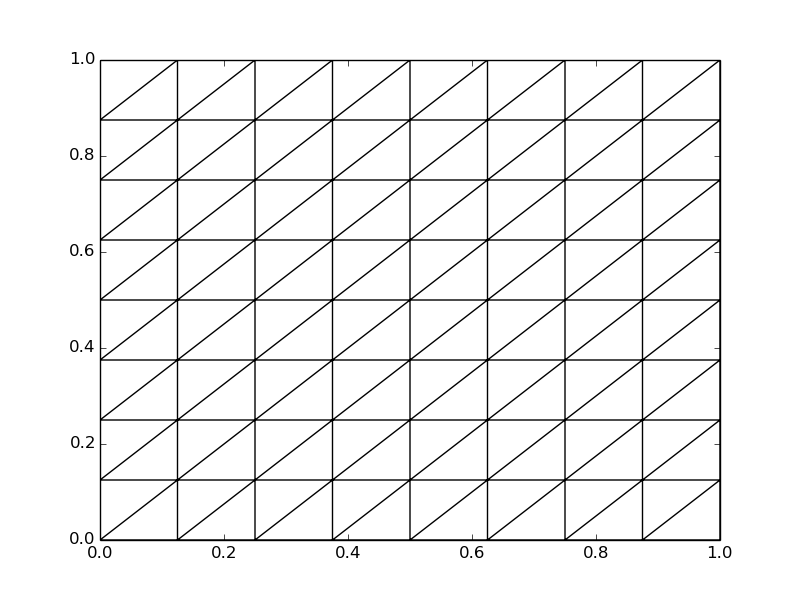} \\
      0 --- 128 cells
  \end{center}
\end{minipage}\hfill
\begin{minipage}{0.5\linewidth}
  \begin{center}
    \includegraphics[width=1.0\linewidth] {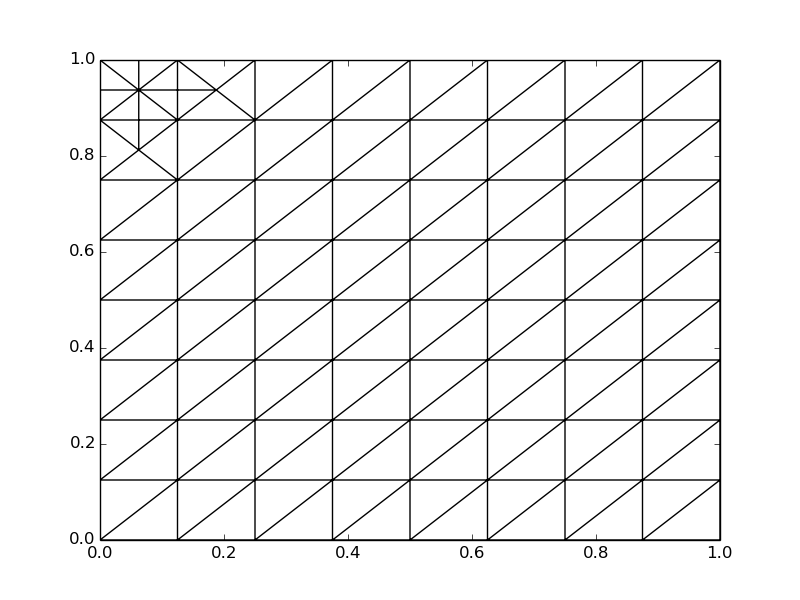} \\
    1 --- 140 cells
  \end{center}
\end{minipage}
\begin{minipage}{0.5\linewidth}
  \begin{center}
    \includegraphics[width=1.0\linewidth] {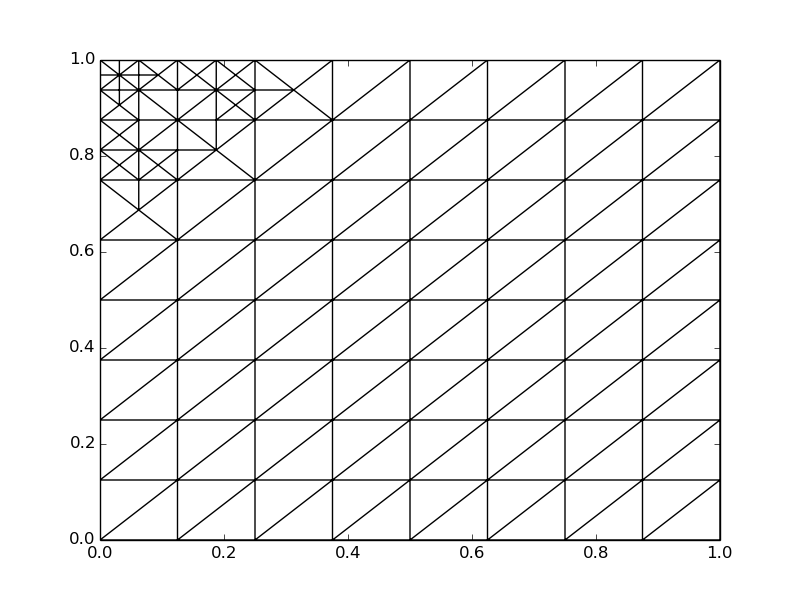} \\
    2 --- 180 cells
  \end{center}
\end{minipage}\hfill
\begin{minipage}{0.5\linewidth}
  \begin{center}
    \includegraphics[width=1.0\linewidth] {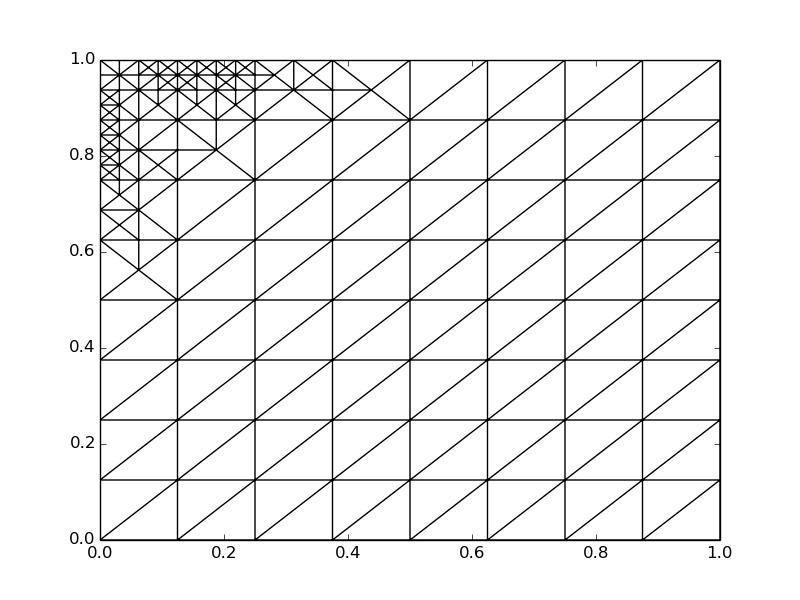} \\
      3 --- 256 cells
  \end{center}
\end{minipage}
\begin{minipage}{0.5\linewidth}
  \begin{center}
    \includegraphics[width=1.0\linewidth] {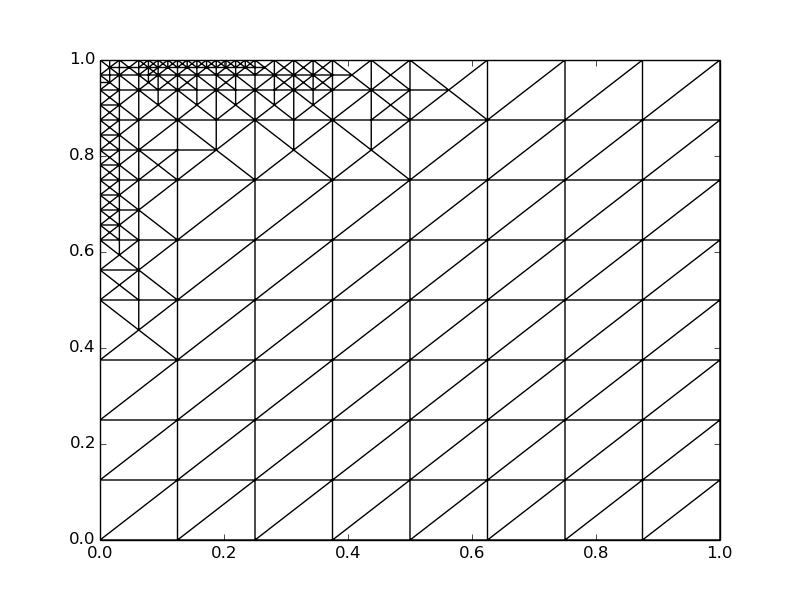} \\
    4 --- 388 cells
  \end{center}
\end{minipage}\hfill
\begin{minipage}{0.5\linewidth}
  \begin{center}
    \includegraphics[width=1.0\linewidth] {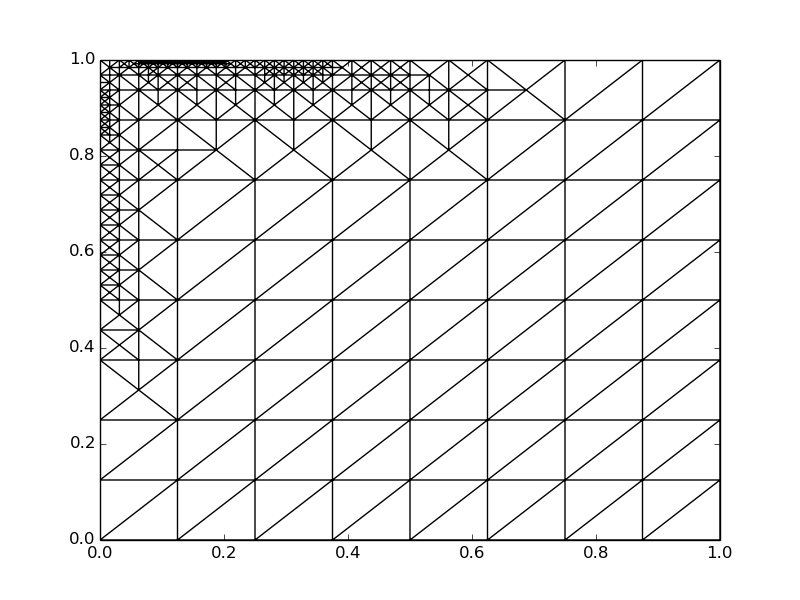} \\
    5 --- 599 cells
  \end{center}
\end{minipage}
\begin{minipage}{0.5\linewidth}
  \begin{center}
    \includegraphics[width=1.0\linewidth] {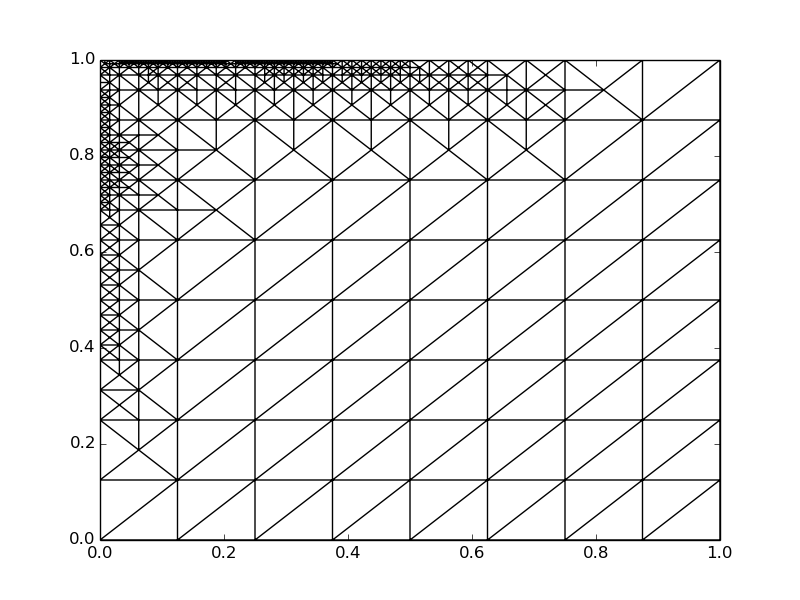} \\
      6 --- 886 cells
  \end{center}
\end{minipage}\hfill
\begin{minipage}{0.5\linewidth}
  \begin{center}
    \includegraphics[width=1.0\linewidth] {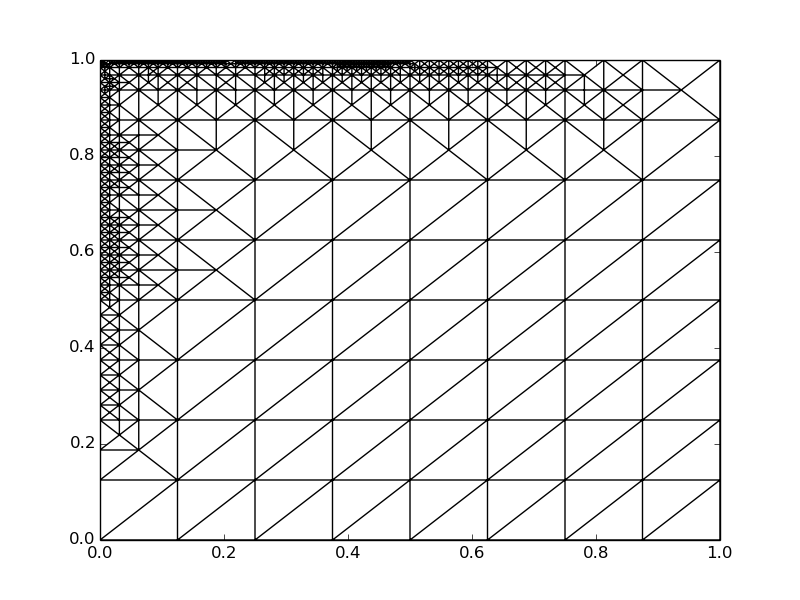} \\
    7 --- 1313 cells
  \end{center}
\end{minipage}
\caption{The grid obtained at succesive steps of adaptation}\label{fig-1}
\end{figure}

\section*{Acknowledgements}
This work was supported by the Russian Foundation for Basic Research  (projects 14-01-00785, 15-01-00026).

\end{document}